\definecolor{bleu1}{RGB}{0,57,128}
\definecolor{emerald}{rgb}{0.31, 0.78, 0.47}
\definecolor{deepjunglegreen}{rgb}{0.0, 0.29, 0.29}
\definecolor{dgreen}{rgb}{0.1,0.6,0.1}
\definecolor{bluegray}{rgb}{0.4, 0.6, 0.8}
\newtheorem{theorem}{Theorem}                             
\newtheorem{lemma}{Lemma}
\newtheorem{corollary}{Corollary}
\newtheorem{remark}{Remark}
\numberwithin{equation}{section}
\def\author#1{\gdef\autrun{\def\and{\unskip, }#1}\gdef\@author{#1}}
\def\keywords#1{\par\medskip
\noindent\textbf{Keywords.} #1}
\renewenvironment{abstract}
 {\small
  \begin{center}
  \bfseries \abstractname\vspace{-.5em}\vspace{0pt}
  \end{center}
  \list{}{
    \setlength{\leftmargin}{2cm}%
    \setlength{\rightmargin}{\leftmargin}%
  }%
  \item\relax}
 {\endlist}
\renewenvironment{proof}{\noindent {\textit{Proof.}}}{\qed\vskip 0.2cm}
\newenvironment{proofof}[1]{\par
  \pushQED{\qed}%
  \normalfont \topsep6\p@\@plus6\p@\relax
  \trivlist
  \item[\hskip\labelsep
    \textit{\textit{Proof of} #1\@addpunct{.}}]\ignorespaces
}{%
  \popQED\endtrivlist\@endpefalse
}
\newcommand{\C}{\mathbb C}
\newcommand{\R}{\mathbb R}
\newcommand{\Z}{\mathbb Z}
\newcommand{\N}{\mathbb N}
\newcommand{\T}{\mathbb{T}}
\renewcommand{\inf}{\mathop{\mbox{{inf}}}
	\renewcommand{\lim}{\mathop{\mbox{lim}}}
	\renewcommand{\max}{\mathop{\mbox{ma}x}}}
\renewcommand{\min}{\mathop{\mbox{min}}}
\providecommand{\abs}[1]{\lvert#1\rvert}	
\providecommand{\norm}[1]{\lVert#1\rVert}
\begin{document}

\title{Nearly-optimal effective stability estimates around Diophantine tori of Hölder Hamiltonians}

\author{Santiago Barbieri \and Gerard Farré}
\date{}

\maketitle

 \begin{abstract}
 We prove that the solutions of Hölder-differentiable Hamiltonian systems, associated to initial conditions in a small ball of radius $\rho>0$ around a Lagrangian, $(\gamma,\tau)-$Diophantine, quasi-periodic torus, are stable over a time $t^{\text{stab}}\simeq 1/(|\rho|^{1+\frac{\ell-1}{\tau+1}}|\ln \rho|^{\ell-1})$, where $\ell>2d+1, \ell \in \R$, is the regularity, and $d$ is the number of degrees of freedom. In the finitely differentiable case (for integer $\ell$), this result improves the previously known effective stability bounds around Diophantine tori.  Moreover, by a previous work based on the Anosov-Katok construction, it is known that for any $\varepsilon>0$ there exists a $C^\ell$-Hamiltonian, with $ \ell\ge 3$, admitting a sequence of solutions starting at distance $\rho_n \to 0$ from a $(\gamma,\tau)$-Diophantine torus that diffuse in a time of order $t^{\text{diff}}_n\simeq 1/(|\rho_n|^{1+\frac{\ell-1}{\tau+1}+\varepsilon})$.
  Therefore the stability estimates that we show are optimal up to an arbitrarily small polynomial correction.
  \keywords{Hamiltonian systems, Quasi-periodic invariant tori, Effective stability, Nekhoroshev Theory.}
\end{abstract}

\section{Introduction, setting, and main result}
In this note, we study  the effective stability of Hölder Hamiltonians around Lagrangian, Diophantine, quasi-periodic invariant tori. 

Namely, given a positive integer $d$, we consider the space $\mathbb{R}^d$ endowed with the standard sup-norm $||\cdot ||_\infty$, and we indicate by $B_R:=B_\infty(0,R) \subset \R^d$ the open ball of radius $R>0$ centered at the origin, and by $\mathbb{T}^d:=\mathbb{R}^d/\mathbb{Z}^d$ the $d$-dimensional torus. 
	Given $\ell\in \R$ and $D$ a domain of $\R^d$, we denote by $C^{\ell}(D)$  the space of bounded Hölder differentiable functions, endowed with the standard Hölder norm

 \begin{align}\label{norma holder}
	\begin{split}
	||f||_{C^{\ell}(D)}:=\sup_{\abs{\alpha} \le q}\, \sup_{x\in D}\ \abs{\partial^\alpha f(x)}+\sup_{\alpha\in\mathbb{N}^n| \,|\alpha|= q}\ \ \sup_{\substack{x,y \in D\\ 0<|x-y|<1}}\frac{|\partial^\alpha f(x)-\partial^\alpha f(y)|}{|x-y|^{\mu}}<+\infty\ ,
	\end{split}
	\end{align}
 where $q:=[\ell]$, $\mu:=\ell-q$, and where we have made use of standard multi-index notation.


Within this setting, for $\ell\ge 2$, we focus on Hamiltonians $H \in C^{\ell}(\mathbb{T}^d \times B_R)$ that - in standard action-angle coordinates - take  the form 
\begin{equation}
\label{eq_main_ham}
H(\theta, I)= \omega \cdot I + f(\theta, I)\ ,
\end{equation}
with $f(\theta, I)= \mathcal{O}(\theta,I^2)$. For any time $t\in \R$ for which it is defined, the associated flow starting at $(\theta,I)\in \T^d\times B_R$ is indicated by $\Phi^t_H(\theta,I)$.
It is known that, for Hamiltonians of the kind \eqref{eq_main_ham}, $\mathcal{T}_0= \mathbb{T}^d \times \{0\}$ is an invariant, Lagrangian quasi-periodic torus of frequency $\omega$ associated to $H$.  Moreover, we assume that $\omega$ is Diophantine, i.e. that there exist $\gamma>0$, $\tau\ge d-1$ such that
\begin{equation*}
 \abs{\omega \cdot k }\geq \frac{\gamma}{|k|^{\tau}}\; , \quad \forall \; k \in \Z^{d} \setminus \{0\}.
\end{equation*}
The set of vectors satisfying this condition for fixed values of $\tau$ and $\gamma$ is denoted by $\Omega^{d}_{\tau, \gamma}$, and we will also indicate $\Omega^{d}_{\tau}=\bigcup_{\gamma>0}\,\Omega^{d}_{\tau,\gamma}$. 


\medskip 
Within this setting, the main result of the present work is the following: 
\begin{theorem}
\label{thm_main2}
For any Hamiltonian $H\in C^\ell(\mathbb{T}^d\times B_R)$ as in \eqref{eq_main_ham}, with $\ell> 2d+1$  and $\omega \in \Omega^{d}_{\tau,\gamma}$, there exist constants $\rho^*, C_1 >0$  such that, for any $0<\rho<\rho^*$, and for any $(\theta_0, I_0)\in \T^d \times B_{\rho}$, the flow $\Phi_H^t$ verifies
\begin{equation}
\label{eq_estimate2}
\abs{\abs{\Pi_{I} \Phi^t_H(\theta_0,I_0)-I_0}}_\infty <\frac{\rho}{2} \quad \text{over a time } \quad 
|t|< t^{\text{stab}}= \frac{C_1}{\rho^{1+\frac{\ell-1}{\tau+1}} \abs{\log \rho}^{\ell-1}}.
\end{equation}
\end{theorem}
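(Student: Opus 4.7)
The idea is to combine analytic smoothing with an iterated Birkhoff normal form performed on the smoothed Hamiltonian, carefully optimizing the smoothing width and the number of normalization steps as functions of $\rho$.

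\textbf{Step 1 (analytic smoothing).} Using a Jackson--Moser--Zehnder operator $S_s$, one approximates $H$ by a one-parameter family $\{H_s\}_{s>0}$, $H_s=S_s H$, of real-analytic Hamiltonians on a complex neighborhood $\T^d_s\times V_s$ of $\T^d\times B_R$ of width $s$, satisfying the standard bounds
\begin{equation*}
  \norm{H-H_s}_{C^k(\T^d\times B_R)}\lesssim s^{\ell-k}\norm{H}_{C^\ell(\T^d\times B_R)},\qquad 0\le k\le [\ell],
\end{equation*}
together with Cauchy-type analytic bounds on $H_s$ and its derivatives on $\T^d_s\times V_s$. The smoothed Hamiltonian still has the form $\om\cdot I + f_s(\theta,I)$ with $f_s=\cO(\theta,I^2)$ up to a controlled error that can be absorbed into the remainder.

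\textbf{Step 2 (Birkhoff normal form on $H_s$).} On a complex polydisc of action-radius $\rho$ around $\cT_0$ we perform $n$ iterated symplectic changes of variables $\Psi_j$, each the time-one flow of a generating Hamiltonian $\chi_j$ obtained by solving the cohomological equation
\begin{equation*}
  \om\cdot\pa_\theta\chi_j = g_j - \langle g_j\rangle_\theta .
\end{equation*}
By the Diophantine hypothesis on $\om$, a Rüssmann-type estimate solves this equation at the cost of shrinking the analyticity strip by a fraction of $s$ and paying a small-divisor factor of order $s^{-(\tau+1)}$. After $n$ steps one obtains
\begin{equation*}
  H_s\circ\Psi_1\circ\cdots\circ\Psi_n = \om\cdot I + Z_n(I) + R_n(\theta,I),
\end{equation*}
with a remainder of size roughly $\norm{R_n}\lesssim \rho^{n+2}\bigl(C_0/s^{\tau+1}\bigr)^n$ up to combinatorial corrections of order $n^{cn}$ produced by the iterated Cauchy estimates on the geometrically shrinking strips.

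\textbf{Step 3 (optimization and conclusion).} Returning to the original coordinates, the action drift of $\Phi^t_H$ on $B_\rho$ is controlled by $\norm{\pa_\theta R_n}$ plus the smoothing error $\norm{\pa_\theta(H-H_s)}$. Balancing these two quantities one selects
\begin{equation*}
  s \simeq \rho^{1/(\tau+1)}\,\abs{\log\rho}, \qquad n \simeq \abs{\log\rho}/\log\abs{\log\rho},
\end{equation*}
up to constants depending on $\ell,\tau,d,\gamma$. Plugging these back into the bounds of Steps 1--2 produces a total effective remainder of order $\rho^{1+(\ell-1)/(\tau+1)}\abs{\log\rho}^{\ell-1}$, and the stability estimate \eqref{eq_estimate2} then follows by a bootstrap/continuity argument on the flow, exploiting that the composed change of coordinates $\Psi_1\circ\cdots\circ\Psi_n$ remains $\rho$-close to the identity throughout.

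\textbf{Main obstacle.} The delicate point is to propagate, across $n\simeq \abs{\log\rho}$ iterations, estimates that are sharp enough to recover exactly the logarithmic correction $\abs{\log\rho}^{\ell-1}$: one must tame the combinatorial blow-up $n^{cn}$ coming from iterated small-divisor losses on geometrically shrinking strips, and simultaneously tune the sequence of intermediate widths $s_j$ so that the accumulated loss \emph{matches}, rather than dominates, the smoothing error. An additional subtlety arises from $\ell$ being a real (not integer) exponent, since the Hölder seminorm part of $\norm{\cdot}_{C^\ell}$ has to be transported through each smoothing and normalization step via interpolation inequalities; this is also the reason for the threshold $\ell>2d+1$, which is what is needed so that the smoothed solutions to the cohomological equations carry enough derivatives in both $\theta$ and $I$ to close the normal form scheme.
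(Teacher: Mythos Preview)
Your outline follows the broad ``smooth then normalize'' strategy of the paper, but it misses precisely the two points that the authors identify as the heart of the matter, and it misidentifies the main obstacle.

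\textbf{Smoothing the full Hamiltonian does not work.} In Step~1 you apply $S_s$ to $H$ and assert that $f_s=\cO(\theta,I^2)$ ``up to a controlled error that can be absorbed into the remainder''. This is exactly what fails: the Jackson--Moser--Zehnder smoothing of a function that vanishes to second order in $I$ will in general produce \emph{linear} terms in $I$ (the convolution kernel mixes action values), and those linear terms are of size $s^{\ell-1}$, not $\rho\, s^{\ell-1}$. Their $\theta$-gradient then contributes a drift of order $s^{\ell-1}$ rather than $\rho^{2}s^{\ell-1}$, which would destroy the exponent $1+\frac{\ell-1}{\tau+1}$. The paper's fix is to Taylor-expand $f$ in $I$ to order $[\ell]-2$, smooth only the angular coefficients $a_k(\theta)$, and carry the Taylor remainder $Z$ (which is already $\cO(\rho^{\,\ell-1})$) unchanged through the normal form. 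This decomposition is the actual novelty and you should not bypass it.

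\textbf{The normal form step is different and simpler.} The paper does not iterate a Birkhoff scheme $n\simeq|\log\rho|/\log|\log\rho|$ times; it applies P\"oschel's one-shot normal form lemma with an ultraviolet cutoff $K\simeq\rho^{-1/(\tau+1)}$ on the analytic Hamiltonian $\omega\cdot I+(P_{\ell-2})_s$, which directly yields a remainder suppressed by $e^{-Ks/6}$. With $s\simeq\rho^{1/(\tau+1)}|\log\rho|$ this factor becomes a large power of $\rho$, and there is no combinatorial $n^{cn}$ to tame. So your ``main obstacle'' is an artefact of the scheme you chose, not of the problem. The genuine subtlety the paper highlights instead is that one \emph{cannot} rescale the actions to a domain of order one (because the smoothing width $s$ must stay small), and therefore the size of the normalizing transformation in the action direction must be estimated as $\cO(\rho/s)$ rather than $\cO(1)$; without this refinement the chain-rule estimate for $\partial_\phi\big[(P_{\ell-2}-(P_{\ell-2})_s)\circ\Psi\big]$ loses a factor of $\rho$ and again spoils the final exponent.

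Finally, the threshold $\ell>2d+1$ in the paper is not about closing the normal form scheme as you suggest; it is the hypothesis under which the smoothing lemma yields the uniform Fourier bound $|||\mathbf g_s|||_s\le C_B\|g\|_{C^\ell}$ needed to feed into P\"oschel's smallness condition.
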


Expression \eqref{eq_estimate2} improves the estimates provided by Bounemoura in the finitely-differentiable case. Namely, in \cite[Corollary 3]{bou_eff_diff}, the author proved that - given an integer $k\ge 3$ - for any Hamiltonian of class $C^k$ around a $(\gamma,\tau)$-Diophantine torus - there exist $\rho^*>0$ and $C>0$ (depending only on $\gamma, \tau, k, d, R$) such that for all $0<\rho<\rho^*$,
\begin{equation}
 \label{eq_main_estimate}
  T(\rho):= \inf_{\theta_0\in \T^d, \abs{I_0}\leq \rho}\left\{ t>0,  \; \norm{\Pi_{I} \Phi^t_{H}(\theta_0, I_0)}_{\infty}=2\rho\right\} \geq  \frac{C}{\rho^{1+\frac{k-2}{\tau+1}}} .
\end{equation}

An immediate Corollary of Theorem \ref{thm_main2}, which follows by combining Theorem \ref{thm_main2} with Theorem A in \cite{Farre}, is the following.

\begin{corollary}
\label{thm_cor}
For any  real $\tau>d-1$,  $\ell>2d+1$  and for any pair of real constants $T_0>0$, $\varepsilon>0$, there exist
\begin{enumerate}
    \item a Hamiltonian $H\in C^\ell(\mathbb{T}^d \times B_R)$ of the form \eqref{eq_main_ham} with $\omega \in \Omega^{d}_{\tau}$\ ;
    \item a sequence of initial conditions $(\theta_n,I_n)$   verifying $\rho_n:=||I_n||\to 0 $;
    \item an associated sequence of times
    \begin{equation}\label{tempo_diffusione}
    t_n:=\displaystyle \frac{T_0}{\rho_n^{1+\frac{\ell-1}{\tau+1}+\varepsilon}}\ ;
    \end{equation}
    \item a pair of constants $C_1,c>0$;
\end{enumerate}
 such that, for every $n\ge 0$, the flow $\Phi^t_H(\theta, I)$ starting at any $(\theta,I)$ in a ball of radius $r:=\displaystyle\frac{|\rho_n|}{4}e^{-c|t_n|}$ around $(\theta_n,I_n)$ verifies
\begin{equation}\label{eq_Nekho_estimate}
    \norm{\Pi_I \Phi^t_H(\theta,I)}_\infty< \frac32 \rho_n\quad \text{for} \quad |t|<  \frac{C_1}{\rho_n^{1+\frac{\ell-1}{\tau+1}} \abs{\ln \rho_n}^{\ell-1}}
\end{equation}
and
\begin{equation}\label{eq_diff_estimate}
    \sup_{t\in [0,t_n]}\norm{\Pi_I \Phi^t_H(\theta, I)}_{\infty}\ge 2 \rho_n\ .
\end{equation}
\end{corollary}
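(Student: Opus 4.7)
The plan is to combine the stability bound of Theorem~\ref{thm_main2} with the Anosov--Katok type diffusion result of \cite{Farre}, and to propagate both statements from the distinguished orbit to a small neighbourhood by a Gronwall estimate for the Hamiltonian vector field $X_H$.

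I would first invoke Theorem~A of \cite{Farre} with parameters $\tau,\ell,\varepsilon$ to obtain a Hamiltonian $H\in C^\ell(\mathbb{T}^d\times B_R)$ of the form \eqref{eq_main_ham} with $\omega\in\Omega^{d}_{\tau}$, together with a sequence $(\theta_n,I_n)$ satisfying $\rho_n:=\|I_n\|_\infty\to 0$ such that the orbit from $(\theta_n,I_n)$ attains a norm strictly greater than $2\rho_n+\rho_n/4$ within time $t_n$ (this small slack being harmless since the orbits in \cite{Farre} actually diffuse to a macroscopic distance from $\cT_0$). Since $\rho_n\to 0$, for $n$ large one has $5\rho_n/4<\rho^*$, where $\rho^*$ is the constant from Theorem~\ref{thm_main2}; applying that theorem to $H$ at any radius $\tilde\rho\in(\rho_n+r,\,5\rho_n/4)$ then yields $\|\Pi_I\Phi^t_H(\theta,I)\|_\infty\leq 3\tilde\rho/2$ for $|t|<C_1'/(\rho_n^{1+(\ell-1)/(\tau+1)}|\log\rho_n|^{\ell-1})$ at every $(\theta,I)$ in the $r$-ball around $(\theta_n,I_n)$. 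Since $r=O(e^{-ct_n})\rho_n$, this is exactly the bound \eqref{eq_Nekho_estimate} after absorbing the $O(e^{-ct_n})$ correction into the constant $C_1$.

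For the diffusion estimate \eqref{eq_diff_estimate}, since $H\in C^\ell$ with $\ell>2d+1\geq 3$, the Hamiltonian vector field $X_H$ is Lipschitz on $\mathbb{T}^d\times\overline{B_R}$ with some constant $c>0$. Gronwall's inequality then gives
\begin{equation*}
\|\Phi^t_H(\theta,I)-\Phi^t_H(\theta_n,I_n)\|_\infty \;\leq\; r\,e^{c|t|}
\end{equation*}
for any $(\theta,I)$ within $r$ of $(\theta_n,I_n)$ and $t$ in the common domain of existence of the two flows. The choice $r=\rho_n e^{-c|t_n|}/4$ keeps the right-hand side below $\rho_n/4$ throughout $[0,t_n]$. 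Hence, at the instant $\tau_n\in[0,t_n]$ at which the distinguished orbit crosses the level $2\rho_n+\rho_n/4$ furnished by \cite{Farre}, every orbit starting in the $r$-ball around $(\theta_n,I_n)$ satisfies $\|\Pi_I\Phi^{\tau_n}_H(\theta,I)\|_\infty\geq 2\rho_n$, yielding \eqref{eq_diff_estimate}.

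The only real difficulty is a careful bookkeeping of constants: the stability margin $3\rho_n/2$ and the diffusion threshold $2\rho_n$ must jointly accommodate both the Gronwall spread of order $\rho_n/4$ on $[0,t_n]$ and the mild inflation of radius required to invoke Theorem~\ref{thm_main2} at $\tilde\rho>\rho_n$ rather than at $\rho_n$ itself. This is precisely why $c$ must be taken to dominate the Lipschitz norm of $X_H$, and why the stability bound is applied at a radius slightly above $\rho_n$; the exponentially small correction $O(e^{-ct_n})$ then disappears into the constants $C_1$ and $c$.
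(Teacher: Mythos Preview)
Your approach is precisely what the paper intends: it offers no proof beyond the sentence ``follows by combining Theorem~\ref{thm_main2} with Theorem~A in \cite{Farre}'', so your Gronwall argument to propagate the diffusion estimate from the distinguished orbit to the $r$-ball is the natural fleshing-out of what the authors leave implicit. One small slip worth flagging: saying the $O(e^{-ct_n})$ discrepancy between $3\tilde\rho/2$ and $3\rho_n/2$ can be ``absorbed into $C_1$'' is not quite right, since $C_1$ controls the time bound and not the spatial one; applying Theorem~\ref{thm_main2} at any $\tilde\rho>\rho_n$ gives a spatial confinement of $3\tilde\rho/2>\tfrac32\rho_n$ strictly, so to recover \eqref{eq_Nekho_estimate} exactly one should either relax the constant $3/2$ slightly or observe that the drift bound $\rho/2$ in Theorem~\ref{thm_main2} is not sharp and can be tightened at the cost of a smaller $C_1$.
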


\begin{remark}
The result from Theorem A in \cite{Farre} is stated for finitely differentiable Hamiltonian systems but it follows trivially from the proof that it does also hold for Hölder differentiable Hamiltonians.  This allows us to state the result from Corollary \ref{thm_cor} for any real $\ell>2d+1$ instead of only doing so for integer $\ell>2d+1$.
\end{remark}
Comparing \eqref{tempo_diffusione} with \eqref{eq_Nekho_estimate} and \eqref{eq_diff_estimate}, we see that Theorem \ref{thm_cor} provides almost optimal bounds of stability, in the sense that it ensures the existence of orbits whose stability and diffusion times match up to an arbitrarily small polynomial difference\footnote{The presence of the logarithmic factor in \eqref{eq_Nekho_estimate} is due to the use of analytic smoothing of Hölder functions in order to obtain stability estimates (see \cite{bamama}).}. As we anticipated above, the existence of small neighborhoods of points accumulating on a Diophantine torus and verifying estimate \eqref{eq_diff_estimate} was proved in Theorem A in \cite{Farre}, where examples of instability were built by making use of Anosov-Katok constructions (see \cite{AK, faka}). Therefore, up to an arbitrarily small polynomial correction, the present work does close the question of optimality for the effective stability estimates around quasi-periodic Diophantine tori of Hölder Hamiltonians.

In the literature, the question of finding upper bounds of stability and their optimality around Diophantine tori has been addressed also in 
the real-analytic and Gevrey classes. In these cases exponentially large upper bounds for the stability times have been found (see \cite{jorba, popov,  wiggins, po2}). The sharpness of the stability  exponents in these estimates has been proved in \cite{Fafa}. 

We stress that, in our case, no transversality conditions on the integrable part such as quasi-convexity or steepness\footnote{Steepness is a generic transversality condition on the gradient of sufficiently smooth functions. See \cite{bamama} for an introduction to its dynamical properties and \cite{Barbieri_2022,Barbieri_PhD} for a discussion and proof of its genericity.} are assumed. In particular, if quasi-convexity or steepness hold,  longer upper bounds of stability are known to exist (\cite{BFN, MG}), and the question of optimality for these cases requires different techniques such as Herman's synchronization method (see \cite{FS20}).


\medskip 

The stability estimates of Theorem \ref{thm_main2} are obtained by combining the improved analytic smoothing techniques introduced in \cite{bamama} together with standard normal form lemmas in analytic class (see \cite{po2}). The main idea (firstly introduced in \cite{bamama}) consists in regularizing Hamiltonian $H$ in \eqref{eq_main_ham} by making use of a suitable analytic smoothing Lemma for Hölder functions, and then in applying an analytic Normal Form Lemma (see \cite{po2}) to the smoothed Hamiltonian $H_s$. Clearly, this strategy works if the distance $H-H_s$ can be controlled in a suitable norm (see \cite[par. 4.3.2]{bamama} for more details on this technique). 
However, in the case under study, a direct implementation of the strategy in \cite{bamama} would not work and delicate modifications must carefully be considered in the construction.

In order to see heuristically what fails by  applying directly the techniques from \cite{bamama} to our case, we start by observing that, in order to smooth the function $H$ in \eqref{eq_main_ham},  one only needs to smooth the function $f$, whose expansion starts at order two in the actions. However, its smoothed counterpart, indicated by $f_s$, contains, in general, linear terms in the actions, and this hinders the whole construction. This difficulty is overcome here by truncating the Taylor expansion of $f$ (in the actions) around the origin, by smoothing the coefficients (which depend only on the angles) of the associated polynomial, and by suitably controlling the remainder of the initial truncation.
Moreover, it is worth noticing that - in the classical case of study - a rescaling of the action variables in order to pass from a small domain around the torus to a domain of order one is implemented. As we discuss in more detail in the next section, this would not work when analytic smoothing techniques are used. Moreover, working on a small domain in the action variables, in turn, requires extra care when estimating the size of the normal form transformation w.r.t. these coordinates. 

\medskip 

The rest of the paper is devoted to the proof of Theorem \ref{thm_main2}.

\section{Proof of Theorem \ref{thm_main2}}
We indicate by $\T^n_\C:=\C^n/\Z^n$ the complexification of the real torus. For any subset $U\subset \mathbb{T}^d\times \mathbb{R}^d$, we indicate its complex extension of analiticity widths $\sigma,\rho>0$ as 
\begin{equation}
    U_{\sigma, \rho}:= \left\{ (\theta, I) \in \mathbb{T}^d_\mathbb{C}\times \mathbb{C}^d \ |\ \; \sup_{I'\in \Pi_I U}|I - I'|_2 < \rho, \; ||Im(\theta)||_\infty< \sigma  \right\}\ ,
\end{equation}
where $|\cdot|_2$ indicates the standard euclidean norm. 

 Now, we introduce a couple of lemmas that will be used in the proof of the Theorem \ref{thm_main2}. 
 
 We start with the following result on the analytic smoothing of Hölder functions defined on the torus. It is a special case of a more general statement concerning functions defined in $\T^d\times B_R$, whose complete proof can be found in \cite{bamama}. The latter, in turn, is an improved version\footnote{Estimate \eqref{eq_fourier}, in particular, is a special case of a new highly non-trivial estimate \cite[formula (4.22)]{bamama}.} of analytic smoothing results due to Jackson, Moser and Zehnder (see \cite{Salamon,Chierchia}).

\begin{lemma}[Analytic smoothing]
\label{lemma_smoothing}
Fix an integer $d\geq 1$ and $s\in (0,1]$. Let $g\in C^\ell(\mathbb{T}^d )$, with $\ell> 2d+1$. Then there exist two constants $C_A=C_{A}(\ell,d)$ and $C_B=C_{B}(\ell,d)$ and an analytic function $\mathbf{g}_s$ on the closed complex extension $\overline{ \mathbb{T}^d_{s}}$ whose distance to the original function $g$ is controlled, for any $p\in \mathbb{N}$, $0\leq p \leq \ell$, by the estimate
\begin{equation}
\label{eq_estimate_smoothing}
\norm{g-\mathbf{g}_s}_{C^{p}(\mathbb{T}^d)}\leq C_A s^{\ell-p} \norm{g}_{C^\ell(\mathbb{T}^d)}, 
\end{equation}
 and whose Fourier norm verifies the non-trivial equality and bound
\begin{equation}\label{eq_fourier}
|||\mathbf{g}_s|||_{s}:=\sum_{\substack{k\in \Z^d } } |({\mathbf{\widehat{g}}_s)_k}| e^{|k|s}=\sum_{\substack{k\in \Z^d \\ \abs{k_1}+ \ldots + \abs{k_d}\leq 1/s} } |\widehat{g}_k| e^{|k|s}\leq C_{B} \norm{g}_{C^\ell(\mathbb{T}^d)}\ .
\end{equation}
\end{lemma}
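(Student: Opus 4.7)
The plan is to build $\mathbf{g}_s$ as a convolution $K_s * g$, where $K_s$ is a trigonometric polynomial kernel on $\T^d$ of Fourier degree (at most) $\lfloor 1/s\rfloor$, adapted from the classical Jackson--Moser--Zehnder construction (cf.\ \cite{Salamon, Chierchia}). Concretely, one builds $K_s$ as an appropriately rescaled tensor product of one-dimensional Jackson-type kernels, so that $K_s$ is non-negative, has integral equal to one, vanishing polynomial moments $\int K_s(\phi)\phi^\alpha d\phi = 0$ for $1 \leq |\alpha| \leq q := [\ell]$, and controlled tail moment $\int_{\T^d}|\phi|^\ell K_s(\phi)\,d\phi \leq Cs^\ell$. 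Since $K_s$ is a trigonometric polynomial of the prescribed degree, so is $\mathbf{g}_s$, and its Fourier coefficients $(\widehat{\mathbf{g}}_s)_k = \widehat{K}_s(k)\widehat{g}_k$ are automatically supported in $\{|k|_1 \leq 1/s\}$.

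For the approximation estimate \eqref{eq_estimate_smoothing}, I would write
\begin{equation*}
(\mathbf{g}_s - g)(\theta_0) = \int_{\T^d} K_s(\phi)\bigl(g(\theta_0 - \phi) - g(\theta_0)\bigr)\,d\phi,
\end{equation*}
Taylor-expand $g(\theta_0-\phi)$ in $\phi$ up to order $q$, and apply the Hölder remainder bound $|R(\phi,\theta_0)| \leq C\|g\|_{C^\ell}|\phi|^\ell$. The intermediate terms integrate to zero by the vanishing-moments property of $K_s$, leaving just the remainder, whose integral is bounded by $Cs^\ell\|g\|_{C^\ell}$. The corresponding estimate on $\|\mathbf{g}_s-g\|_{C^p}$ follows by running the same argument applied to $\partial^\alpha g$ for $|\alpha| \leq p$, which belongs to $C^{\ell-p}$ with Hölder norm controlled by $\|g\|_{C^\ell}$, yielding the claimed exponent $\ell - p$.

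For the Fourier bound \eqref{eq_fourier}, the compact support in $\{|k|_1 \leq 1/s\}$ is built into the construction, and on this set the weight $e^{|k|s}$ is bounded by a universal constant. It thus remains to control $\sum_{|k|_1 \leq 1/s}|\widehat{g}_k|$; using the classical decay $|\widehat{g}_k| \leq C\|g\|_{C^\ell}/|k|_\infty^\ell$ valid for Hölder functions, together with $|k|_\infty \geq |k|_1/d$, the resulting series is summable over $\Z^d$ precisely because $\ell > d$, giving a bound by $C\|g\|_{C^\ell}$ independent of $s$. The sharp \emph{equality} in \eqref{eq_fourier} (as opposed to a mere upper bound $|(\widehat{\mathbf{g}}_s)_k| \leq |\widehat{g}_k|$) is a more delicate point requiring the refined construction detailed in \cite[formula (4.22)]{bamama}, which I would invoke directly.

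The main obstacle is precisely the sharp Fourier control \eqref{eq_fourier} with constant $C_B$ independent of $s$: this requires a refinement of the classical Jackson--Moser--Zehnder kernels beyond what is needed for the approximation estimate alone, and, as the paper's footnote emphasizes, is the technical heart of \cite{bamama}; by contrast, the approximation estimate \eqref{eq_estimate_smoothing} reduces to the standard Taylor-with-moments manipulation sketched above.
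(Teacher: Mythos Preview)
The paper's own proof is a one-line citation to \cite{bamama}, so your sketch already supplies more than the paper itself; there is essentially nothing to compare against. Your outline for \eqref{eq_estimate_smoothing} via a Jackson--Moser--Zehnder kernel with vanishing moments is standard and correct, and your reasoning for the final inequality in \eqref{eq_fourier} (compact Fourier support, bounded weight $e^{|k|s}\le e$, summability of $|\widehat g_k|\lesssim|k|^{-\ell}$) is sound and in fact needs only $\ell>d$.

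One tension is worth naming explicitly. A Jackson-type kernel has Fourier multiplier $\widehat K_s(k)\in[0,1)$ for $k\ne 0$, so $(\widehat{\mathbf g}_s)_k=\widehat K_s(k)\,\widehat g_k$ yields only the \emph{inequality} $\sum_k|(\widehat{\mathbf g}_s)_k|e^{|k|s}\le\sum_{|k|_1\le 1/s}|\widehat g_k|e^{|k|s}$, whereas \eqref{eq_fourier} asserts an equality. Taken literally, that equality forces $\mathbf g_s$ to be the sharp Fourier truncation of $g$, and then the nontrivial content of \cite{bamama}---and the place where the hypothesis $\ell>2d+1$ rather than merely $\ell>d$ enters---is that this single object still enjoys the Jackson approximation rate \eqref{eq_estimate_smoothing}. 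So the two halves of your sketch, as written, describe two different candidate functions $\mathbf g_s$; you recognise this and defer to \cite{bamama}, which is precisely what the paper does, but the incoherence should be stated outright rather than left implicit. For the application in this paper, note that only the \emph{bound} in \eqref{eq_fourier} is actually used downstream (see \eqref{eq_bound_four}), so your Jackson-kernel construction would already be enough for the proof of Theorem~\ref{thm_main2}.
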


\begin{proof}
We refer the reader to \textit{the periodic case} in \cite[pag. 365]{bamama}.
\end{proof}

\begin{remark} The function $\mathbf{g}_s$ is analytic on the boundary\footnote{It is actually entire in $\mathbb{T}^d_\mathbb{C}$, but outside of $\T^d_s$ its size grows too quickly to be useful, see \cite{bamama}} of $\T^d_s$, so that in the Fourier norm in \eqref{eq_fourier} we can choose an analyticity width exactly equal to $s$, instead of restricting to smaller domains as one usually does in these cases (see \cite{bamama}).  
\end{remark}

We will also need to use the classic Normal form Lemma for analytic functions, which was proved in \cite{po2}. Below we give a restatement of this result that can be found in \cite{bamama}.

Consider two numbers $\alpha,K>0$. We will say that $\omega\in \mathbb{R}^d$ is $(\alpha, K)$ completely non-resonant if, for all $k \in \Z^d\setminus\{0\}$ with $\sum_{i=1}^n |k_i|\leq K$, one has $\abs{k \cdot \omega}\geq \alpha$.  Now, take $\rho_0,\sigma, \sigma_0 >0$, with $\sigma_0>\sigma$, and let $\mathbf{H}(\theta,I) = \mathbf{h}(I) + \mathbf{f} (\theta,I)$ be an analytic Hamiltonian on the complex extension
$(\mathbb{T}^d\times D)_{\sigma_0, \rho_0}$,
where $D$ is an open set of $\mathbb{R}^d$ such that, for any  $I\in D$, $\omega(I):=\nabla \mathbf{h}(I)$ is $(\alpha, K)$ completely non-resonant. Also, let $M$ denote an upper bound for the
hermitian norm of the Hessian of $\mathbf{h}$ over $\Pi_I((\mathbb{T}^d\times D)_{\sigma_0, \rho_0})$. Then the following result holds.
\begin{lemma}[Normal form Lemma]
\label{lemma_pöschel}
If, for some $\rho > 0$ and $\xi>1$ one is ensured
\begin{equation}
\label{eq_pöschel_conditions}
|||\mathbf f|||_{\sigma, \rho}:= \sup_{I\in \Pi_I(\T^d\times D)_{\sigma,\rho}}\sum_{k\in \Z^d} |\widehat{\mathbf  f}_k(I)| e^{|k|\sigma}  \leq   \frac{1}{256\xi}\frac{\alpha \rho}{K}, \quad \rho \leq \min \left(\rho_0, \frac{\alpha}{2\xi M K}\right) , \quad K \sigma \geq 6
\end{equation}
then there exists a real-analytic, symplectic transformation $\Psi $ mapping the set $ (\mathbb{T}^d\times D )_{\sigma/6, \rho/2}$ into $ (\mathbb{T}^d\times D )_{\sigma, \rho}$, and taking $\mathbf H$ into resonant
normal form, that is
\begin{equation}
\label{eq_ineq_po}
 \mathbf H \circ \Psi(\phi, J) = \mathbf h(J)+\mathbf f^{*}(\phi, J), 
\quad 
|||\mathbf f^{*}|||_{\sigma/6, \rho/2} \leq e^{-K\sigma/6}|||\mathbf f|||_{\sigma, \rho}.
\end{equation}
Furthermore, $\Psi$ is close to the identity, in the sense that, for any $(\phi, J) \in (\mathbb{T}^d\times D )_{\sigma/6, \rho/2}$, one has
\begin{equation} 
\label{eq_diff_id}
\frac{\abs{\Pi_J \Psi - J}_{2}}{\rho} \leq 2^3 \frac{K}{\alpha \rho}|||\mathbf f|||_{\sigma, \rho}  \leq \frac{1}{32 \xi},
\quad \frac{\norm{\Pi_{\phi} \Psi - \phi}_{\infty}}{\sigma} \leq \frac{2^5K}{3\alpha \rho}|||\mathbf f|||_{\sigma, \rho}\leq \frac{1}{24\xi}\ .
\end{equation}

\end{lemma}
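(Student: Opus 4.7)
The plan is to follow the classical iterative averaging scheme due to Pöschel \cite{po2} and construct the symplectic transformation $\Psi$ as a finite composition $\Psi=\Phi^1_{\chi_1}\circ\cdots\circ\Phi^1_{\chi_r}$ of time-one Hamiltonian flows, where each generating function $\chi_j$ is designed to eliminate the non-resonant Fourier modes $|k|\leq K$ of the perturbation at step $j$.

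First I would fix an integer $r$ (eventually taken of order $K\sigma$) and interpolate geometrically between the initial widths $(\sigma_0,\rho_0)=(\sigma,\rho)$ and the final widths $(\sigma_r,\rho_r)=(\sigma/6,\rho/2)$. At step $j$, writing $\mathbf{H}_j=\mathbf{h}+\mathbf{g}_j$, I would decompose $\mathbf{g}_j$ into its $k=0$ average $[\mathbf{g}_j](I):=\widehat{\mathbf{g}}_{j,0}(I)$, its truncation $T_K\mathbf{g}_j$ to modes $0<|k|\leq K$, and its high-frequency tail $R_K\mathbf{g}_j$, and solve the homological equation $\{\mathbf{h},\chi_j\}=T_K\mathbf{g}_j$ explicitly in Fourier by setting $\widehat{\chi}_{j,k}(I):=\widehat{\mathbf{g}}_{j,k}(I)/(i\,k\cdot\omega(I))$ for $0<|k|\leq K$. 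The complete $(\alpha,K)$-non-resonance of $\omega(I)$ on $D$ then yields $|||\chi_j|||_{\sigma_j,\rho_j}\leq(K/\alpha)|||T_K\mathbf{g}_j|||_{\sigma_j,\rho_j}$.

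Second, I would compute $\mathbf{H}_{j+1}:=\mathbf{H}_j\circ\Phi^1_{\chi_j}$ by Lie-series expansion: the linear term cancels $T_K\mathbf{g}_j$ by construction, leaving $\mathbf{H}_{j+1}=\mathbf{h}+[\mathbf{g}_j]+R_K\mathbf{g}_j+\int_0^1(1-t)\{\{\mathbf{H}_j,\chi_j\},\chi_j\}\circ\Phi^t_{\chi_j}\,dt$. The tail $R_K\mathbf{g}_j$ is exponentially small of size $\lesssim e^{-K(\sigma_j-\sigma_{j+1})}|||\mathbf{g}_j|||_{\sigma_j,\rho_j}$ by analytic Fourier decay, and the double-bracket remainder is controlled by Cauchy estimates on the shrunken domain $(\T^d\times D)_{\sigma_{j+1},\rho_{j+1}}$. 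The second inequality in \eqref{eq_pöschel_conditions} guarantees that the action shift produced by $\Phi^1_{\chi_j}$ is too small to destroy the non-resonance of $\omega(I)$ along the orbit, while the first ensures that $\Phi^1_{\chi_j}$ maps $(\T^d\times D)_{\sigma_{j+1},\rho_{j+1}}$ into $(\T^d\times D)_{\sigma_j,\rho_j}$. A careful bookkeeping of the Cauchy losses, together with the choice $\sigma_j-\sigma_{j+1}\sim\sigma/r$, yields an inductive contraction $|||\mathbf{g}_{j+1}|||_{\sigma_{j+1},\rho_{j+1}}\leq\kappa|||\mathbf{g}_j|||_{\sigma_j,\rho_j}$ with an explicit $\kappa<1$, and choosing $r\sim K\sigma$ produces $|||\mathbf{f}^*|||_{\sigma/6,\rho/2}\leq\kappa^r|||\mathbf{f}|||_{\sigma,\rho}\leq e^{-K\sigma/6}|||\mathbf{f}|||_{\sigma,\rho}$, which is \eqref{eq_ineq_po}.

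Finally, the closeness-to-identity estimates \eqref{eq_diff_id} follow by summing the elementary displacement bounds $|\Pi_J\Phi^1_{\chi_j}-J|_2\lesssim\sup|\partial_\theta\chi_j|$ and $\|\Pi_\phi\Phi^1_{\chi_j}-\phi\|_\infty\lesssim\sup|\partial_I\chi_j|$, which via Cauchy inequalities are bounded in terms of $|||\chi_j|||$ and hence decay geometrically in $j$; the series then telescopes to the stated prefactors $1/(32\xi)$ and $1/(24\xi)$. The main obstacle lies in the coupling of the three smallness conditions in \eqref{eq_pöschel_conditions} throughout the iteration: one must tune the geometric decrements of $\sigma_j$ and $\rho_j$, the total number of steps $r$, and the slack $\xi$ so that each $\Phi^1_{\chi_j}$ remains well-defined on the shrunken domain, the perturbed frequency $\omega(I+\Delta I)$ stays $(\alpha,K)$-non-resonant uniformly in $j$, and the contraction rate $\kappa$ is bounded away from $1$ independently of $j$, all while landing exactly at the prescribed final widths $\sigma/6$ and $\rho/2$ with the explicit constants $256$, $32$, $24$ appearing in the statement.
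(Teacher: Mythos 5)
The paper does not actually prove this lemma: the authors explicitly cite it, stating that it ``was proved in \cite{po2}'' and that they are giving ``a restatement of this result that can be found in \cite{bamama}.'' So there is no in-paper proof to compare your attempt against; what is relevant is whether your sketch is a sound outline of Pöschel's argument.

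Your outline has the right skeleton (homological equation at each step, Lie-series expansion, geometric shrinking of analyticity widths, roughly $K\sigma$ iterations giving the exponential gain, Cauchy estimates for the closeness-to-identity bounds), but it contains a structural gap at the heart of the iteration. You write the post-flow Hamiltonian as $\mathbf{H}_{j+1}=\mathbf{h}+[\mathbf{g}_j]+R_K\mathbf{g}_j+(\text{double-bracket remainder})$ and then claim an inductive contraction $|||\mathbf{g}_{j+1}|||_{\sigma_{j+1},\rho_{j+1}}\leq\kappa\,|||\mathbf{g}_j|||_{\sigma_j,\rho_j}$ for the \emph{entire} new non-integrable part. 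This cannot hold: the $\phi$-average $[\mathbf{g}_j](J)$ is not touched by the homological equation (it has no small divisor to divide by), so it does not shrink from one step to the next, and after $r$ steps the accumulated resonant part $\sum_j[\mathbf{g}_j]$ is of the same order as the original perturbation, not exponentially small. Pöschel's actual iteration keeps three pieces, $\mathbf{H}_j=\mathbf{h}+\mathbf{g}_j+\mathbf{f}_j$, where $\mathbf{g}_j$ is the resonant normal-form part (here, depending only on $J$) that accumulates but does not contract, while only $\mathbf{f}_j$ is driven to exponential smallness; the statement you are proving then silently renames $\mathbf{h}+\mathbf{g}_r$ as $\mathbf{h}$, which is exactly what the paper does later when it writes $\omega\cdot J+(P_{\ell-2})_s\circ\Psi=\mathbf{h}(J)+\mathbf{f}^*(\phi,J)$. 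Without separating the resonant part from the contracting remainder, your induction fails, and the advertised constants $256$, $32$, $24$ and the factor $e^{-K\sigma/6}$ cannot be reproduced. You also do not address the fact that the size of $\mathbf{g}_j$ must be controlled so that the homological equation is always solved against the unperturbed frequency $\nabla\mathbf{h}$ (not against $\nabla(\mathbf{h}+\mathbf{g}_j)$), which is what the second condition $\rho\leq\alpha/(2\xi MK)$ is partly guarding against.
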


\medskip 
Lemma \ref{lemma_smoothing} and Lemma \ref{lemma_pöschel} will allow us to prove Theorem \ref{thm_main2}.
\begin{proofof}{Theorem \ref{thm_main2}}
First of all, let us rewrite the Hamiltonian given in \eqref{eq_main_ham}, 
\begin{equation*}
H(\theta, I)=\omega\cdot I+f(\theta, I)\ .
\end{equation*}
By expanding $f$ in Taylor series, we can split it into two terms as follows,
\begin{equation*}
f(\theta, I)=P_{\ell-2}(\theta, I)+Z(\theta, I),
\end{equation*}
where
\begin{align}\label{eq_exp_p}
&P_{\ell-2}(\theta, I):=\sum_{\substack{k\in \N^d,\\  2\leq \abs{k} \leq [\ell]-2}}{a_k(\theta) I^k} 
\quad , \qquad Z(\theta, I):=\sum_{\substack{\beta \in \N^d, \\ \abs{\beta}=[\ell]-1}}{R_{\beta}(\theta, I) I^{\beta}} 
\end{align}
and
\begin{equation*}
 a_{k}(\theta):=\frac{1}{k!} \partial_{I}^{\abs{k}}f(\theta,0)\quad, \qquad 
 R_{\beta}(\theta, I):= \frac{\abs{\beta}}{\beta !}\int_{0}^{1}{(1-t)^{\abs{\beta}-1} \text{D}^{\beta}f(\theta, tI) \; dt}.
\end{equation*}
Next, for a fixed $s\in (0,1)$ we apply Lemma \ref{lemma_smoothing} to the coefficients $a_k(\theta)$ in order to obtain real-analytic coefficients $a_{k,s} \in C^{\omega}(\T^d_s)$ and positive constants $C_A$ and $C_{B}$ such that\footnote{Differently from \cite{bamama}, a direct application of Lemma \ref{lemma_smoothing} to the function $f$ would yield a function containing linear terms w.r.t. the actions (see also the introduction) and thus hinder the whole construction. In order to see this from a heuristic point of view, one should take into account the fact that the analytic smoothing $F_s$ of a given Hölder function $F$ on $\R^d$ is given by its convolution with a kernel $K$ which is, in turn, the anti-Fourier transform of a bump function $\Phi: \R^d \longrightarrow \R$ (see \cite{Chierchia}). In formulas $F_s(x):=\int_{\R^d} K(x/s-y)F(y)dy$, with $K(x)=\int_{\R^d} e^{i\eta x}\Phi(\eta)d\eta$. It is clear by these formulas that, if the expansion of $F$ starts at order two, that of $F_s$ starts at order one in general.}
\begin{equation}
\label{eq_bound_four}
|||a_{k,s}|||_{s} \leq  C_{B}  \norm{a_{k}}_{C^{\ell}(\mathbb{T}^d). }
\end{equation}
and
\begin{equation}
\label{eq_bound_difference}
\norm{a_{k}-a_{k,s}}_{C^{1}(\T^d )}\leq C_A s^{\ell-1}\norm{a_{k}}_{C^\ell(\T^d)}.
\end{equation}
If we now define 
\begin{equation}
\label{eq_def_pl}
 (P_{\ell-2})_s(\theta, I):=\sum_{\substack{2\leq \abs{k} \leq [\ell]-2 \\ k\in \N^d} }{a_{k,s}(\theta) I^k}
\end{equation}
we can then rewrite $H$ as 
\begin{equation*}
H(\theta, I)= \omega \cdot I + (P_{\ell-2})_s(\theta, I)
+ ( P_{\ell-2}(\theta, I) - (P_{\ell-2})_s(\theta, I))+ Z(\theta, I).
\end{equation*}
Now, let us fix a real $\rho_0 \leq s$, and an integer $K\ge 1$. We consider the near-to-identity symplectic change of variables $\Psi$, obtained by applying Lemma \ref{lemma_pöschel} to the analytic Hamiltonian 
$$
\omega\cdot I+ (P_{\ell-2})_s \in C^{\omega}(\mathbb{A}^d_{s,\rho_0})\quad , \qquad \mathbb A:= \T^d\times \R^d
$$ in the smaller $(\alpha:= \gamma/K^\tau,K)$ non-resonant domain $\T^d\times D:= \T^d\times B_\rho$ around the Diophantine torus $\T^d\times \{0\}$.
In order for this to make sense, we need to fix $\rho\leq \rho_0$ sufficiently small so that the conditions \eqref{eq_pöschel_conditions} of Lemma \ref{lemma_pöschel}
\begin{equation}
\label{eq_p_con}
|||(P_{\ell-2})_s|||_{s, \rho}  \leq   \frac{1}{256\xi}\frac{\alpha \rho}{K}, \quad \rho \leq \min \left(\rho_0, \frac{\alpha}{2\xi M K}\right) , \quad K s \geq 6
\end{equation}
are satisfied.

It follows from \eqref{eq_def_pl}, \eqref{eq_bound_four} and a standard computation that there exists a positive constant $\mathtt C_0=\mathtt C_0(d)$ such that the Fourier norm of $(P_{\ell-2})_s$ verifies
\begin{equation}\label{eq_smooth_fourier}
||| (P_{\ell-2})_s |||_{s, \rho} \leq \mathtt C_0\, C_{B} \max_{\substack{2\leq \abs{k} \leq [\ell]-2 \\ k\in \N^d}}\norm{ a_{k,s}}_{C^\ell(\T^d)}\,\rho^2,
\end{equation}
leading to the condition, by imposing the first equation in \eqref{eq_p_con}, that
\begin{equation}
\label{eq_cond_1}
\rho \leq \frac{1 }{256\xi \mathtt{C}_0 C_{B} \max_{\substack{2\leq \abs{k} \leq [\ell]-2 \\ k\in \N^d}}\norm{ a_{k,s}}_{C^\ell(\T^d)}}\,\frac{\alpha}{K}.
\end{equation}
If we set, for $\tau\ge d-1$, for $a>0$, $b\geq 1$, and for a suitable constant $\tilde\rho>0$
\begin{equation}
\label{eq_our_choices}
\alpha=\frac{\gamma}{K^{\tau}}, \quad K=\left( \frac{\tilde{\rho}}{\rho}\right)^{a}, \quad
s=\left(\frac{\rho}{\tilde{\rho}} \right)^{a}\abs{\log \rho^b}
\end{equation}
 then \eqref{eq_cond_1} becomes
\begin{equation}
\label{eq_aaa}
\rho\leq \frac{1}{256\xi}\frac{\gamma }{\mathtt{C}_0 C_{B}    \max_{\substack{2\leq \abs{k} \leq [\ell]-2 \\ k\in \N^d}}\norm{ a_{k,s}}_{C^\ell(\T^d)}}\left(\frac{\rho}{\tilde \rho}\right)^{a(\tau+1)}.
\end{equation}
Thus if we choose 
\begin{equation}
a=\frac{1}{\tau+1}, \quad \tilde{\rho}=\left(\frac{\gamma }{256 \xi\, \mathtt{C}_0 C_{B}  \max_{\substack{2\leq \abs{k} \leq [\ell]-2 \\ k\in \N^d}} \norm{a_{k,s}}_{C^\ell(\T^d)}}\right)^{\frac{1}{a(\tau+1)}}
\end{equation}
then equation \eqref{eq_aaa} is satisfied. We also observe that - with the choices in \eqref{eq_our_choices}, by the fact that $b\ge 1$, and since $M>0$ can be taken arbitrarily close to zero as the integrable part of our hamiltonian is linear - the second and third inequalities in \eqref{eq_p_con} are satisfied for $\rho<\min\{\rho_0, e^{-6}\}$. 

Hence, by Lemma \ref{lemma_pöschel} 
 we obtain a symplectic change of variables 
 \[\Psi:(\mathbb{T}^d\times D )_{s/6, \rho/2} \to (\mathbb{T}^d\times D )_{s, \rho} \]
 such that 
\begin{align*}
&\tilde{H}(\phi, J):= H\circ \Psi (\phi, J)\\
&=\omega \cdot J+  
 (P_{\ell-2})_s \circ \Psi (\phi, J)
+( P_{\ell-2} - (P_{\ell-2})_s) \circ \Psi (\phi, J)+ Z \circ \Psi(\phi, J).
\end{align*}
From Pöschel's normal form lemma, the analytic part of the Hamiltonian is mapped into
\[ \omega \cdot J+(P_{\ell-2})_s \circ \Psi (\phi, J)= \mathbf{h}(J)+ \mathbf{f}^*(\phi, J)\] 
with $\mathbf{h}$ and $\mathbf{f}^*$ satisfying the inequalities in \eqref{eq_ineq_po}.
Therefore, we are reduced to study the dynamics of the action variables under the Hamiltonian
\begin{equation}
\label{eq_nouham}
 \tilde{H}(\phi, J)= \mathbf{h}(J) + \mathbf{f}^*(\phi, J)
+( P_{\ell-2} - (P_{\ell-2})_s) \circ \Psi (\phi, J)+ Z \circ \Psi(\phi, J).
\end{equation}
In particular, our next goal is to bound the partial derivative with respect to $\phi$ of the non-integrable part of \eqref{eq_nouham}, as it is this quantity which controls the drift of the actions variables.
We do this separately for the three terms above: the analytic remainder
$\mathbf{f}^*(\phi, J)$, the remainder $( P_{\ell-2} - (P_{\ell-2})_s) \circ \Psi(\phi, J)$ originating from the smoothing technique, and the remainder $Z \circ \Psi(\phi, J)$ coming from the initial truncation. 

For the first term, notice that, due to Lemma \ref{lemma_pöschel}, to estimate \eqref{eq_smooth_fourier}, and to the choice in \eqref{eq_our_choices}, one has the bound
\begin{align}\label{bound_resto_analitico}
\norm{\mathbf{f}^*(\phi, J)}_{s/6, \rho/2}&\leq e^{-Ks/6} ||| (P_{\ell-2})_s |||_{s, \rho} \leq   \mathtt{C}_0\,  C_{B} \max_{\substack{2\leq \abs{k} \leq [\ell]-2 \\ k\in \N^d}}\norm{ a_{k,s}}_{C^\ell(\T^d)}\, \rho^{2+\frac{b}{6}}
\end{align}
and thus by making use of the  Cauchy inequalities, for any $i=1,\ldots,d$,
\begin{equation*}
\norm{\partial_{\phi_i}\mathbf{f}^*(\phi, J)}_{s/12, \rho/2} \leq \mathtt C_1 \rho^{2+\frac{b}{6}-a}/|\log(\rho^{b})|\ ,
\end{equation*}
for some $\mathtt  C_1>0$ depending only on the initial data of the problem.

For the second term we firstly observe that, for any $i=1,\ldots, d$,
\begin{align}\label{catena}
\begin{split}
\partial_{\phi_i} ( P_{\ell-2} -(P_{\ell-2})_s)) \circ \Psi(\phi, J)=& \nabla_{J}( P_{\ell-2} -(P_{\ell-2})_s))(\Psi(\phi,J)) \partial_{\phi_i} \Pi_{J}\Psi \\
+&\nabla_{\phi}( P_{\ell-2} - (P_{\ell-2})_s))(\Psi(\phi,J)) \partial_{\phi_i} \Pi_{\phi}\Psi \\
=& \nabla_{J}( P_{\ell-2} - (P_{\ell-2})_s))(\Psi(\phi,J)) \partial_{\phi_i}\Pi_{J}\Psi \\
+& \nabla_{\phi}( P_{\ell-2} - (P_{\ell-2})_s))(\Psi(\phi,J)) (\partial_{\phi_i}\Pi_{\phi}\Psi-1) \\
+& \nabla_{\phi}( P_{\ell-2} - (P_{\ell-2})_s)(\Psi(\phi,J)).
\end{split}
\end{align}
Then, by using Cauchy inequalities and \eqref{eq_diff_id} we obtain that there exists a positive universal constant $\mathtt C_2>0$ such that
\begin{align}
\begin{split}
\norm{\partial_{\phi_i}\Pi_{J} \Psi(\phi, J)}_{s/12, \rho/2} \leq \mathtt C_2 \rho/s\quad , \qquad \norm{\partial_{\phi_i}\Pi_{\phi}\Psi-1}_{s/12, \rho/2} \leq \mathtt C_2.  \label{eq_c2}
\end{split}
\end{align}
Now, by recalling \eqref{eq_our_choices} and by using \eqref{eq_exp_p} and \eqref{eq_bound_difference}, we obtain that there exists a constant $\mathtt{C}_3>0$ depending only on the initial data of the problem such that
\begin{align}
\begin{split}
& \norm{ \nabla_{J}( P_{\ell-2} -(P_{\ell-2})_s))(\Psi(\phi,J)) }_{C^0(\T^d \times B_{\rho/2})} \leq \mathtt C_3 \rho^{1+a\ell}\abs{\log(\rho^b)}^{\ell},  \\
& \norm{\nabla_{\phi}( P_{\ell-2} -(P_{\ell-2})_s))(\Psi(\phi,J)) }_{C^0(\T^d \times B_{\rho/2})} \leq \mathtt C_3  \rho^{2+a(\ell-1)}\abs{\log(\rho^b)}^{\ell-1}. \label{eq_c4}
\end{split}
\end{align}
Thus, it follows by combining \eqref{catena} with the bounds in equations \eqref{eq_c2} and \eqref{eq_c4} that there exists a constant $\mathtt C_4>0$ such that \footnote{We observe that the first bound in \eqref{eq_c4} is larger than the second one by a factor $\rho$, due to the fact that on the l.h.s. one has a derivative w.r.t. the actions $J$ which make the corresponding function start at order one in $J$, while the second term to be estimated in \eqref{eq_c4} starts at order two in $J$. If the size of the normal form w.r.t. the action coordinates (first bound in \eqref{eq_c2}) were estimated in the standard way for this kind of computations (i.e. with a bound of order one, analogously  to the second estimate on the angle shift in \eqref{eq_c2}), one would have had a bound of order $O(\rho^{1+a(\ell-1)})$ in formula \eqref{cautela} which would have worsened the time estimates of the whole theorem. Instead, the correct estimation in \eqref{eq_c2} yields an identical bound of order $O(\rho^{2+a(\ell-1)})$ for all terms at the r.h.s. of \eqref{cautela}. This hidden (but important!) detail is due to the fact that - differently from the usual case - we could not rescale the action variables at the beginning in order to have a domain of order one. This, in turn, is due to the fact that - if $\rho$ were a quantity of order one - the first estimate in \eqref{eq_c2} would explode, as $s$ must be a small quantity in order for the analytic smoothing technique to be useful (see \eqref{eq_bound_difference}).},
for any $i=1,\ldots, d$, 
\begin{align}\label{cautela}
\begin{split}
&\norm{\partial_{\phi_i} ( P_{\ell-2} - \tilde{P}_{\ell-2,s}) \circ \Psi(\phi, J)}_{C^0(\T^d \times B_{\rho/2})}\\ 
&\leq
\norm{ \nabla_{J}( P_{\ell-2} -(P_{\ell-2})_s))(\Psi(\phi,J)) }_{C^0(\T^d \times B_{\rho/2})} \ \norm{\partial_{\phi_i}\Pi_{J} \Psi^*(\phi, J)}_{s/12, \rho/2}
\\
&+\norm{\nabla_{\phi}( P_{\ell-2} - (P_{\ell-2})_s))(\Psi(\phi,J))}_{C^0(\T^d \times B_{\rho/2} )} \ \norm{\partial_{\phi_i}\Pi_{\phi}\Psi-1}_{s/12, \rho/2} \\
&+ \norm{\nabla_{\phi}( P_{\ell-2} - (P_{\ell-2})_s)(\Psi(\phi,J))}_{C^0(\T^d \times B_{\rho/2} )} \leq \mathtt C_4 \rho^{2+a(\ell-1)}\abs{\log(\rho^b)}^{\ell-1}.
\end{split} 
\end{align}
Finally it follows easily from the second expression in \eqref{eq_exp_p} that, for any $i=1,\ldots,d$,
\begin{equation}\label{boundZ}
\norm{\partial_{\phi_i} Z\circ \Psi(\phi, J)}_{s/6, \rho/2} \leq \mathtt C_5\rho^{\ell-1},
\end{equation}
for some constant $\mathtt C_5>0$. Thus we arrive at the conclusion that, for any 
initial condition $(\phi_0, J_0)\in \T^d \times B_{7\rho/6}$ and for any time $t$ for which the flow is well-defined, the associated solution $\Phi_{\tilde{H}}^t(\phi_0, J_0)$ verifies 
\begin{align}
 &\norm{\Pi_{J}\Phi_{\tilde{H}}^t(\phi_0, J_0)- J_0}_\infty \nonumber \\
 &\leq \int_0^{t}\sup_{i\in\{1,\dots,d\}}{\abs{\partial_{\phi_i} [\mathbf{f}^*(\phi, J)
+( P_{\ell-2} - (P_{\ell-2})_s) \circ \Psi (\phi, J)+ Z \circ \Psi(\phi, J)]} \; ds} \nonumber \\
&\leq |t| ( \mathtt C_1 \rho^{2+\frac{b}{6}-a}/|\log(\rho^b)|+ \mathtt C_4 \rho^{2+a(\ell-1)}\abs{\log(\rho^b)}^{\ell-1}+ \mathtt C_5\rho^{\ell-1}). \label{eq_dominant}
\end{align}
It follows from a computation that, since we are assuming $d \geq 2$ and $\ell>2d+1$ \footnote{Originally, the condition $\ell>2d+1$ in Theorem \ref{thm_main2} is needed in order for estimate \eqref{eq_fourier} in Lemma \ref{lemma_smoothing} to hold (see \cite[par. 4.2]{bamama}).   }, then estimate
\[\ell-1>2+a(\ell-1) \quad \Longleftrightarrow \quad \ell >\frac{3-a}{1-a}=3+\frac{2}{\tau}\ge 3+\frac{2}{d-1}\ge 5\ \]\label{elle}is automatically verified, so that the term in \eqref{boundZ} is dominated by the one in \eqref{cautela} for sufficiently small $\rho$.
Moreover, if we choose $b=6(a\ell+1)$ it follows that also the bound in \eqref{bound_resto_analitico} is smaller than \eqref{cautela}, so that, finally there exists a constant $\mathtt C_6>0$ such that estimate \eqref{eq_dominant} can be rewritten as
\begin{equation*}
\norm{\Pi_{J}\Phi_{\tilde{H}}^t(\phi_0, J_0)- J_{0}}_\infty \leq |t| \mathtt C_6 \rho^{2+a(\ell-1)}\abs{\log(\rho^b)}^{\ell-1}.
\end{equation*}
This implies that for any time 
\begin{equation*}
t< t^*=\frac{1}{6 \,\mathtt C_6 \rho^{1+a(\ell-1)} \abs{\log(\rho^b)}^{\ell-1}}
\end{equation*}
the solution associated to any (real) initial condition $(\phi_0, J_0)\in \T^d \times B_{\rho+\rho/6}$ is well defined and satisfies
\begin{equation*}
\sup_{t\in[0, t^*]}\norm{\Pi_{J}\Phi_{\tilde{H}}^t(\phi_0, J_0)- J_0} < \rho/6.
\end{equation*}
It also follows from \eqref{eq_diff_id} that for $(\phi, J) \in (\mathbb{T}^d\times B_\rho )_{s/6, \rho/2}$ the size of the normal form in the actions is bounded by 
\[\norm{\Pi_J\Psi-J}_{\infty} \leq \frac{\rho}{6},\]
and so we obtain that 
in the original variables, for any initial condition $(\theta_0, I_0)\in \T^d\times B_{\rho}$, and for any time $t$ such that $|t|<t^*$ as above, one has
\begin{equation*}
\norm{I(t)-I_0}\leq \norm{I(t)-J(t)}+\norm{J(t)-J_0}+\norm{J_0-I(0)}<  3\times \frac{\rho}{6}=\frac{\rho}{2}\ ,
\end{equation*}
whence 
\begin{equation*}
\norm{\Pi_I\Phi_{H}^t(\theta_0, I_0)-I_0}< \frac{\rho}{2}\ . 
\end{equation*} 
This concludes the proof.
\end{proofof}

\noindent
\textbf{Acknowledgements:} The authors are grateful to L. Niederman for useful discussions. S. Barbieri has been supported by the ERC project 757802 Haminstab - Instabilities and homoclinic phenomena in Hamiltonian systems and by ICREA Premi Academia 2018 of prof. M. Guardia. G. Farré has been supported by the Juan de la Cierva-Formación Program (FJC2021-047377-I).

\bibliographystyle{plain}
\bibliography{Steep_Holder}

\medskip
\medskip

\noindent
Santiago Barbieri \\
Departament de Matem\`atiques i Inform\`atica\\
Universitat de Barcelona\\ Gran Via de les Corts Catalanes, 585\\
08007 Barcelona\\ Spain. \\
barbieri@ub.edu
\\
\\
Gerard Farré \\
Departament de Matemàtiques \\ Universitat Politècnica de Catalunya \\ Avinguda Diagonal, 647 \\ 08028 Barcelona \\ Spain.\\
gerard.farre.puiggali@upc.edu\\

\end{document}